\newtheorem{theorem}{Theorem}[section]
\newtheorem{lemma}[theorem]{Lemma}
\newtheorem{corollary}[theorem]{Corollary}
\theoremstyle{definition}
\newtheorem{definition}[theorem]{Definition}
\theoremstyle{remark}
\newtheorem{remark}[theorem]{Remark}
\theoremstyle{definition}
\newtheorem*{ack}{Acknowledgments}
\newcommand{\dbar}{\bar\partial}
\newcommand{\im}{{\rm im\,}}
\newcommand{\dom}{{\rm dom\,}}
\newcommand{\hooklongrightarrow}{\lhook\joinrel\relbar\joinrel\relbar\joinrel\longrightarrow}
\numberwithin{equation}{section}
\begin{document}

\title[Transversal Fredholm Property]{A transversal Fredholm property for the $\bar\partial$-Neumann problem on $G$-bundles\\
{\tiny\sc Dedicated to M.A.\ Shubin on his $65^{\rm th}$ birthday}}

\author{Joe J Perez}
\address{Universit\"at Wien}
\email{joe\_j\_perez@yahoo.com}
\thanks{Supported by FWF grant P19667, {\it Mapping Problems in Several 
Complex Variables}}

\subjclass[2010]{Primary 32W05; 35H20}

\date{}

\begin{abstract} Let $M$ be a strongly pseudoconvex complex $G$-manifold with compact quotient $M/G$. We provide a simple condition on forms $\alpha$ sufficient for the regular solvability of the equation $\square u=\alpha$ and other problems related to the $\bar\partial$-Neumann problem on $M$.\end{abstract}

\maketitle

\section{Introduction}

Let $M$ be a manifold which is the total space of a $G$-bundle
\[G\longrightarrow M\longrightarrow X\]
\noindent
with $X$ compact. With respect to a $G$-invariant measure on $M$, define the Hilbert space $L^2(M)$. This decomposes as 
\begin{equation}\label{decomp} L^2(M)\cong L^2(G)\otimes L^2(X),\end{equation}
and if we assume that the action of $G$ is from the right, then $t\in G$ acts in $L^2(M)$ by $t \to R_t \otimes {\bf 1}_{L^2(X)}$. The von Neumann algebra of operators on $L^2(G)$ commuting with right translations is denoted by $\mathcal L_G$ and the corresponding algebra of bounded linear operators on $L^2(M)$ that commute with the action of $G$ is denoted by $\mathcal B(L^2(M))^G$. This has a decomposition itself as follows,
\[\mathcal B(L^2(M))^G \cong \mathcal B(L^2(G)\otimes L^2(X))^G \cong \mathcal L_G\otimes \mathcal B(L^2(X))).\]
\begin{definition}Let $M$ be a $G$-manifold with quotient $X=M/G$ and let $\mathcal H_1, \mathcal H_2$ be Hilbert spaces of sections of bundles over $M$. A closed, densely defined, linear operator $A:\mathcal H_1 \to \mathcal H_2$ which commutes with the action of $G$ is called \emph{transversally Fredholm} if the following conditions are satisfied:

\begin{enumerate}

\item there exists a finite-rank projection $P_{L^2(X)}\in\mathcal B(L^2(X))$ such that $\ker A\subset \im({\bf 1}_{L^2(G)}\otimes P_{L^2(X)})$ 

\item there exists a finite-rank projection $P'_{L^2(X)}\in\mathcal B(L^2(X))$ such that $\im A\supset\im({\bf 1}_{L^2(G)}\otimes P'_{L^2(X)})^\perp$.
\end{enumerate}\end{definition}

This note will provide a simple example of this idea. Let $M$ be a strongly pseudoconvex complex manifold which is also the total space of a $G$-bundle $G\longrightarrow M\longrightarrow X$ with $X$ compact. Furthermore, assume that $G$ acts on $M$ by holomorphic transformations. With respect to a $G$-invariant measure and Riemannian structure, define the Hilbert spaces of $(p,q)$-forms $L^2(M,\Lambda^{p,q})$.

On $M$, consider Kohn's Laplacian, $\square$ and its spectral decomposition, $\square = \int_0^\infty \lambda dE_\lambda$ in $L^2(M,\Lambda^{p,q})$. If $q>0$, it was shown in \cite{P1} that if $\delta\ge 0$, then the Schwartz kernel of the spectral projection $P_\delta = \int_0^\delta dE_\lambda$ belongs to $C^\infty(\bar M\times\bar M)$. Choosing a piecewise smooth section $X\hookrightarrow M$, we may write points in $M$ as pairs $(t,x)\in G\times X$. The Schwartz kernel $K$ of $P_\delta$ then, almost everywhere, takes the form
\[K(t,x;s,y) = K(ts^{-1},x;e,y)=:\kappa(ts^{-1};x,y),\]
\noindent
where we have used the $G$-invariance of $P_\delta$. It is also true that $\kappa$ has an expansion
\begin{equation}\label{kappa}\kappa(t;x,y) = \sum_{kl} \psi_k(x) h_{kl}(t)\bar\psi_l(y)\end{equation}
\noindent
where $(\psi_k)_k$ is an orthonormal basis of $L^2(X)$. The functions $h_{kl}$ are smooth in $G$ with $\sum_{kl}\|h_{kl}\|_{L_R^2(G)}^2<\infty$, where $L_R^2(G)$ consists of the functions on $G$ that are square-integrable with respect to right-Haar measure ({\it cf.}\ proof of Lemma 6.2 in \cite{P1}).

The main result of the present paper is the fact that when $\kappa$ corresponds to $P_\delta$, the sum in equation \eqref{kappa} can be taken to be finite. This means that the spectral projections of $\square$ are subordinate to simple projections of the form $P={\bf 1}_{L^2(G)}\otimes P_{L^2(X)}$ with $P_{L^2(X)}$ the projection onto the space spanned by the $\psi_k$ that appear in the sum. Since there are finitely many, we have that ${\rm rank}\, P_{L^2(X)}<\infty$. Thus our main result in this note is
\begin{theorem} Let $M$ be a strongly pseudoconvex complex manifold which is also the total space of a $G$-bundle $G\longrightarrow M\longrightarrow X$ with $X$ compact. Furthermore, assume that $G$ acts on $M$ by holomorphic transformations. It follows that for $q>0$, the Laplacian $\square$ in $L^2(M,\Lambda^{p,q})$ is transversally Fredholm. \end{theorem}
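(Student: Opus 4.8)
The plan is to deduce both defining conditions of transversal Fredholmness from the single geometric statement highlighted before the theorem --- that the spectral projection $P_\delta$ is subordinate to a projection $\mathbf 1_{L^2(G)}\otimes P_{L^2(X)}$ with $P_{L^2(X)}$ of finite rank --- and then to establish that statement through the finiteness of the sum in \eqref{kappa}. First I would fix $\delta>0$ and record the spectral-theoretic consequences of the containment $\im P_\delta\subseteq\im(\mathbf 1_{L^2(G)}\otimes P_{L^2(X)})$, using throughout that $P_\delta$ reduces the self-adjoint operator $\square$. For condition (1), every harmonic form lies in the eigenvalue-zero space, so $\ker\square\subseteq\im P_\delta$ and the containment yields $\ker\square\subseteq\im(\mathbf 1_{L^2(G)}\otimes P_{L^2(X)})$ at once. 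For condition (2), the restriction of $\square$ to the reducing subspace $(\im P_\delta)^\perp=\im(\mathbf 1-P_\delta)$ has spectrum in $[\delta,\infty)$ and is therefore boundedly invertible there; hence $\square$ maps $\dom\square\cap(\im P_\delta)^\perp$ onto $(\im P_\delta)^\perp$, giving $\im\square\supseteq(\im P_\delta)^\perp$. Taking orthogonal complements in the containment reverses the inclusion, so $(\im P_\delta)^\perp\supseteq\im(\mathbf 1_{L^2(G)}\otimes P_{L^2(X)})^\perp$, and thus $\im\square\supseteq\im(\mathbf 1_{L^2(G)}\otimes P'_{L^2(X)})^\perp$ with $P'_{L^2(X)}=P_{L^2(X)}$. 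In particular both projections required by the definition may be taken equal.

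Next I would reduce the containment to finiteness of the expansion \eqref{kappa}. Writing the action of $P_\delta$ through its kernel and inserting $\kappa(t;x,y)=\sum_{kl}\psi_k(x)h_{kl}(t)\bar\psi_l(y)$ exhibits $P_\delta$ as $\sum_{kl}C_{h_{kl}}\otimes|\psi_k\rangle\langle\psi_l|$, where $C_{h_{kl}}\in\mathcal L_G$ is right convolution by $h_{kl}$. If only finitely many indices occur, I let $P_{L^2(X)}$ be the orthogonal projection of $L^2(X)$ onto $\spanc\{\psi_k:k\text{ appears}\}$; then $P_\delta=(\mathbf 1\otimes P_{L^2(X)})P_\delta(\mathbf 1\otimes P_{L^2(X)})$, so $\im P_\delta\subseteq\im(\mathbf 1_{L^2(G)}\otimes P_{L^2(X)})$ with $\operatorname{rank}P_{L^2(X)}<\infty$. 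Everything therefore rests on showing that $(\psi_k)$ may be chosen to make the sum finite, equivalently that the transverse support $\overline{\spanc}\bigcup_t\bigl(\im\mathcal K(t)+\im\mathcal K(t)^\ast\bigr)$ of the $L^2(X)$-operator-valued function $t\mapsto\mathcal K(t)$ with kernel $\kappa(t;\cdot,\cdot)$ is finite-dimensional.

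Proving this finiteness is the main obstacle. The approach I would take is to decompose the $G$-invariant projection $P_\delta$ via the Plancherel decomposition associated with $\mathcal L_G$ --- the abelian case $P_\delta=\int^\oplus P_\delta(\theta)\,d\theta$ being the guiding model --- so that each fiber is the spectral projection onto $[0,\delta]$ of a transverse operator $\square(\theta)$ on $L^2(X,\Lambda^{p,q})$. Strong pseudoconvexity with $q>0$ gives, through Kohn's subelliptic estimate, a gain of $1/2$ derivative, and since the geometry is $G$-invariant this estimate is uniform in $\theta$; combined with the compactness of $H^{1/2}(X)\hookrightarrow L^2(X)$ and the pushing-up of eigenvalues at high transverse frequency, it bounds $\operatorname{rank}P_\delta(\theta)$ by a constant independent of $\theta$. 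The genuinely delicate point is to pass from a uniform rank bound to a common finite-dimensional receptacle, i.e.\ to show that the fiber ranges $\im P_\delta(\theta)$ all lie in one finite-dimensional subspace $V\subseteq L^2(X)$; for this I would exploit the smoothness of $\kappa$ on the compact space $\bar M\times\bar M$ established in \cite{P1}, together with the summability $\sum_{kl}\|h_{kl}\|_{L_R^2(G)}^2<\infty$ and the idempotent identity $P_\delta^2=P_\delta$, which constrain the matrix $(h_{kl})$ of convolution operators tightly enough to force its support in the index $k$ to be finite. Once $V$ is produced, choosing $(\psi_k)$ adapted to $V$ renders the sum finite and completes the argument.
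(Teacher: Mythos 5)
Your first two paragraphs are correct and coincide with what the paper does implicitly: conditions (1) and (2) of transversal Fredholmness do follow from the single containment $\im P_\delta\subseteq\im({\bf 1}_{L^2(G)}\otimes P_{L^2(X)})$ exactly as you argue (harmonic forms lie in $\im P_\delta$; $\square$ is boundedly invertible on $(\im P_\delta)^\perp$, which contains $\im({\bf 1}\otimes P_{L^2(X)})^\perp$), and that containment does reduce to finiteness of the sum in \eqref{kappa}. The gap is your third paragraph, which carries the entire mathematical weight of the theorem, and it has two independent problems. First, the Plancherel route is not available at the paper's level of generality: $G$ is an arbitrary Lie group, and the paper explicitly targets nonunimodular $G$ (the \cite{GHS} and \cite{HHK} examples), where Plancherel theory requires $G$ to be type I and involves the Duflo--Moore operators; worse, for nonabelian $G$ the fibers of a $G$-invariant operator under the direct-integral decomposition act on spaces of the form $\mathcal H_\pi\otimes L^2(X)$ with infinite-dimensional multiplicity, not on $L^2(X)$, so ``$\operatorname{rank}P_\delta(\theta)$ bounded uniformly in $\theta$'' is not even the right statement, and the transverse family $\square(\theta)$ with fiberwise-uniform subelliptic estimates is postulated rather than constructed.

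Second, and decisively: the step you yourself flag as ``genuinely delicate'' --- passing from fiberwise finiteness to a common finite-dimensional receptacle $V\subset L^2(X)$ --- is exactly the content of the theorem, and the mechanism you propose for it fails. Smoothness of $\kappa$ on $\bar M\times\bar M$, the summability $\sum_{kl}\|h_{kl}\|_{L^2_R(G)}^2<\infty$, and idempotency do \emph{not} constrain the index support to be finite: on $M=\mathbb R\times S^1$, take $\hat P(\xi)$ to be the rank-one projection onto an analytic family $v(\xi)=c(\xi)\sum_n a_n\xi^n e^{inx}$ (with $a_n$ rapidly decreasing, $\xi\in[1/4,1/2]$, and $\hat P(\xi)=0$ otherwise); the resulting translation-invariant, self-adjoint, idempotent operator has a jointly smooth convolution kernel and finite summability, yet its slice ranges span an infinite-dimensional subspace of $L^2(S^1)$, so no finite-rank $P_{L^2(X)}$ dominates it, in any basis. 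This also shows that a uniform fiber rank bound (here rank one) is strictly weaker than what is needed. The paper closes this gap by a completely different, Plancherel-free mechanism: Corollary \ref{bigsmooth} and the spectral calculus (using $\|\square^k u\|\le\delta^k\|u\|$ on $\im P_\delta$) yield the uniform two-sided estimate $\|u\|_{H^s(M)}\lesssim\|u\|_{L^2(M)}$ on $L=\im P_\delta$, and then the slicing argument of Theorem \ref{gam}, powered by the Rellich-type Lemma \ref{rellichesque} on the compact quotient $X$: if $u\in L$ satisfies $\langle u|_{S_t},\psi_j\rangle=0$ for $j\le N$ and all $t\in G$, integrating the slice estimates \eqref{sliceest} over $G$ gives $\|u\|_{L^2(M)}\le\delta_N C\|u\|_{L^2(M)}$ with $\delta_N\to0$, forcing $u=0$ for $N$ large; finitely many restrictions to $S_e$ then furnish the transverse receptacle (Corollary \ref{finsum}). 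Some version of this compactness-through-slices argument --- valid for every Lie group, unimodular or not --- is what your outline is missing, and it cannot be recovered from the algebraic constraints on the matrix $(h_{kl})$ alone.
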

\noindent
We will also show that the $\dbar$-Neumann problem has regular solutions for $g\in \im P^\perp$. 

As well as sharpening the results in \cite{P1}, the results of this note will be useful in studying the $\dbar$-Neumann problem and its consequences for $G$-manifolds with nonunimodular structure group; in \cite{P1}, $G$ was always assumed unimodular. These $G$-manifolds, among others, occur naturally as complexifications of group actions, as shown in \cite{HHK}.

The present results, in addition to the amenability property introduced in \cite{P2}, will lead to a better understanding of two important exemplary nonunimodular $G$-manifolds discussed in \cite{GHS}. One of these has a large space of $L^2$-holomorphic functions while the other has $L^2\mathcal O=\{0\}$. 

\begin{remark}All the results in this note remain valid for weakly pseudoconvex $M$ satisfying a subelliptic estimate, and for the boundary Laplacian, $\square_b$, \cite{P3}. \end{remark}

\section{Invariant operators in $L^2(M)$}\label{decomp}

Here we briefly sketch the construction of the Schwartz kernel \eqref{kappa} of $P_\delta$. We will continue to simplify notation by suppressing the operators' acting in bundles; some additional details are in \cite{P1}. 

On the group alone, the projection $P_L$  onto a translation-invariant subspace $L\subset L^2(G)$ is a left-convolution operator with distributional kernel $\kappa$,
\[(P_L u)(t) = (\lambda_\kappa u)(t) = \int_G ds\  \kappa(ts^{-1})u(s),\qquad (u\in L^2(G)),  \]
\noindent
where $ds$ is the right-invariant Haar measure.

Let us lift this definition to $L^2(M)$ by taking the decomposition \eqref{decomp} a step further. Letting $(\psi_k)_k$ be an orthonormal basis for $L^2(X)$, we may write
\[L^2(M)\cong L^2(G)\otimes L^2(X) \cong \bigoplus_k L^2(G)\otimes\psi_k,\]
\noindent
and with respect to this decomposition write matrix representations for operators in $L^2(M)$ as 
\[\mathcal B(L^2(M)) \ni P \longleftrightarrow [P_{kl}]_{kl}, \qquad P_{kl}\in\mathcal B(L^2(G)).\]
\noindent
When $P\in\mathcal B(L^2(M))^G$ each of the $P_{kl}$ is an operator commuting with the right action and thus is a left convolution operator. Thus $P_{kl}=\lambda_{h_{kl}}$ for distributions $h_{kl}$ on $G$, as in the expansion \eqref{kappa}. When $P$ is a self-adjoint projection, we find that the matrix of convolutions $H=[\lambda_{h_{kl}}]_{kl}$ is an idempotent in that $\sum_k H_{jk} H_{kl} = H_{jk}$ and the matrix corresponding to $P^*$, has matrix representation $[\lambda_{h_{lk}}^*]_{kl}$. 

\section{Regularity of the $\bar\partial$-Neumann problem on $G$-manifolds}\label{regres}

We provide a brief list of the properties of the $\dbar$-Neumann problem relevant to our work here and refer the reader to \cite{FK, GHS, P1} for more detail. With the invariant measure and Riemannian structure on $M$ define the Sobolev spaces $H^s(M,\Lambda^{p,q})$ of $(p,q)$-forms on $M$. Note that the $G$-invariance of the structures and the compactness of $X$ imply that any two such Sobolev spaces are equivalent. A word on notation: we will write $A\lesssim B$ to mean that there exists a $C>0$ such that $|A(u)|\le C |B(u)|$ uniformly for $u$ in a set that will be made clear in the context.
\begin{lemma}\label{reg1} Suppose that $M$ is strongly pseudoconvex and $U$ is an open subset of $\bar M$ with compact closure. Assume also that $\zeta, \zeta_{1}\in C^{\infty}_{c}(U)$ for which $\zeta_{1}|_{{\rm supp}(\zeta)}=1$.  If $q>0$ and $\alpha|_{U}\in H^{s}(U,\Lambda^{p,q})$, then $\zeta(\square +1)^{-1}\alpha\in H^{s+1}(\bar M,\Lambda^{p,q})$ and 
 \begin{equation}\label{est1}\|\zeta (\square +1)^{-1}\alpha\|_{s+1}^2\lesssim \|\zeta_{1}\alpha\|_s^2+\|\alpha\|_0^2.\end{equation}
 \end{lemma}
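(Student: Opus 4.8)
The plan is to reduce the statement to a purely local regularity question for the $\dbar$-Neumann problem and then invoke the standard subelliptic machinery of Kohn and Folland--Kohn, whose validity depends only on strong pseudoconvexity in a relatively compact region. Write $u=(\square+1)^{-1}\alpha$, so that $(\square+1)u=\alpha$ and, since $\square+1\ge 1$, we have at once $\|u\|_0\lesssim\|\alpha\|_0$. Because $\zeta$ and $\zeta_1$ are compactly supported in $U$ and $\overline U$ is compact, the non-compactness of $M$ plays no role: everything takes place in a fixed relatively compact neighborhood of $\operatorname{supp}(\zeta)$, which I would cover by finitely many coordinate patches, some interior and some meeting $bM$.

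In the interior patches the operator $\square+1$ is elliptic of order two, so ordinary interior elliptic regularity already gives a gain of two derivatives, which is more than we need. The essential work is in the boundary patches. There I would use that strong pseudoconvexity yields Kohn's basic subelliptic estimate: for a $(p,q)$-form $v$ with $q>0$ lying in $\dom(\dbar)\cap\dom(\dbar^*)$ and supported in a boundary chart,
\[\|v\|_{1/2}^2\lesssim Q(v,v):=\|\dbar v\|_0^2+\|\dbar^* v\|_0^2+\|v\|_0^2,\]
the hypothesis $q>0$ being exactly what makes this estimate available. Combined with the global energy identity $Q(u,u)=((\square+1)u,u)=(\alpha,u)\lesssim\|\alpha\|_0^2$, this furnishes the base of the induction.

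The gain from $H^s$ to $H^{s+1}$ would then be obtained by the customary bootstrap. Introducing tangential pseudodifferential operators $\Lambda^s$ (or tangential difference quotients), I would apply $\zeta\Lambda^s$ to the equation, commute it past $\square+1$ and past the cutoff, and estimate the resulting commutators; the role of the condition $\zeta_1|_{\operatorname{supp}(\zeta)}=1$ is precisely to ensure that every such commutator is supported where $\zeta_1\equiv 1$, so that its contribution is controlled by $\|\zeta_1\alpha\|_s$ together with lower-order terms absorbed into $\|\alpha\|_0$. This yields tangential regularity of order $s+1$; the missing normal derivatives are then recovered algebraically from the equation itself, the normal direction being non-characteristic for the second-order part of $\square$. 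Iterating in $s$, with the subelliptic estimate as the base case, produces the full estimate \eqref{est1}.

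Finally, one must upgrade these a priori inequalities to genuine membership $\zeta u\in H^{s+1}$, since a priori $u$ is only known to lie in $\dom(\square)$. I expect this, rather than the formal bookkeeping, to be the main obstacle: it is handled by elliptic regularization in the sense of Kohn--Nirenberg, replacing $\square$ by $\square+\epsilon\Delta$ for an auxiliary elliptic second-order $\Delta$, deriving the estimates uniformly in $\epsilon$, and passing to the limit $\epsilon\to0$; the same family of estimates simultaneously justifies the manipulations with $\Lambda^s$. All of these steps are local and relatively compact, so they go through verbatim from the classical theory in \cite{FK, P1}.
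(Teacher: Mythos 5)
Your proposal is correct and takes essentially the same route as the paper, whose entire proof of Lemma \ref{reg1} is a citation of Prop.\ 3.1.1 in \cite{FK} together with its extension to the noncompact case in \cite{E}; your sketch simply unwinds the internals of those cited arguments (the subelliptic basic estimate for $q>0$, the tangential pseudodifferential bootstrap with nested cutoffs yielding the right-hand side $\|\zeta_1\alpha\|_s^2+\|\alpha\|_0^2$, recovery of normal derivatives from the equation, and elliptic regularization in the sense of Kohn--Nirenberg). In particular, your inclusion of the global error term $\|\alpha\|_0^2$, which is exactly the trace of the nonlocality of $(\square+1)^{-1}$ handled by the pseudolocal estimates of \cite{E}, shows you have identified the one point where noncompactness genuinely enters.
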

 \begin{proof}This is Prop. 3.1.1 from \cite{FK} extended to the noncompact case in \cite{E}. \end{proof}

It follows easily (Corollary 4.3, \cite{P1}) that the image of the Laplacian's spectral projection $P_\delta$ is contained in $C^\infty(\bar M, \Lambda^{p,q})$. 

In order to derive properties of the Schwartz kernel of $P_\delta$, we will need global Sobolev estimates strengthening the previous result. The following assertion (Theorem 4.5 of \cite{P1}) provides global {\it a priori} Sobolev estimates on $M$ and is a generalization of Prop.\ 3.1.11, \cite{FK} to the noncompact case. Note that this crucially uses the uniformity on $M$ guaranteed by the $G$-action and the compactness of $X$.

\begin{lemma}\label{reg2} Let $q>0$. For every integer $s\ge 0$, the following estimate holds uniformly,
\[\| u\|_{s+1}^2\lesssim\|\square u\|_s^2+ \| u\|_0^2, \qquad (u\in \dom(\square)\cap C^\infty(\bar M,\Lambda^{p,q})).\]
\end{lemma}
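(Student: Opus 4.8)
The plan is to localize the compact-manifold estimate of Folland--Kohn (Prop.\ 3.1.11 of \cite{FK}) and then reassemble the local pieces, using the $G$-action to keep every constant uniform. First I would fix a finite open cover of the compact base $X$ by coordinate charts together with a subordinate partition of unity, and lift it through $M\to X$ to a $G$-invariant cover $\{U_i\}$ of $\bar M$ with cutoffs $\zeta_i,\zeta_{1,i}$ chosen as in Lemma \ref{reg1} (so $\zeta_{1,i}\equiv 1$ on ${\rm supp}\,\zeta_i$) and with $\sum_i\zeta_i^2=1$. Because this cover descends from finitely many charts on $X$ and the remaining patches are $G$-translates of these, it has bounded multiplicity and the $\zeta_i,\zeta_{1,i}$ fall into finitely many $G$-orbits; this is the structural fact that makes the constants below independent of $i$.

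Next I would compare the global norm with the sum of the local ones. Using $\sum_i\zeta_i^2=1$ and commuting $\zeta_i$ past derivatives (the commutators being one order lower, with uniformly bounded coefficients by $G$-invariance) gives $\|u\|_{s+1}^2\lesssim\sum_i\|\zeta_i u\|_{s+1}^2+\|u\|_s^2$. To each summand I would apply the local a priori estimate in the localized form $\|\zeta_i u\|_{s+1}^2\lesssim\|\zeta_{1,i}\square u\|_s^2+\|\zeta_{1,i}u\|_s^2$, with the same constant for every $i$ by the orbit structure just described. Summing over $i$ and invoking bounded multiplicity once more collapses the right-hand side back into global norms, yielding $\|u\|_{s+1}^2\lesssim\|\square u\|_s^2+\|u\|_s^2$. (Replacing $\square+1$ by $\square$ throughout changes only the lower-order terms, since $(\square+1)^{-1}$ is a contraction and $\square\ge 0$.)

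Finally I would run an induction on $s$ to remove the slack between $\|u\|_s^2$ and $\|u\|_0^2$ on the right. The case $s=0$ is the basic subelliptic estimate $\|u\|_1^2\lesssim\|\square u\|_0^2+\|u\|_0^2$, valid for $q>0$ on a strongly pseudoconvex manifold and already the $s=0$ instance of the localized estimate above. For the inductive step, the hypothesis gives $\|u\|_s^2\lesssim\|\square u\|_{s-1}^2+\|u\|_0^2\le\|\square u\|_s^2+\|u\|_0^2$, and substituting this into $\|u\|_{s+1}^2\lesssim\|\square u\|_s^2+\|u\|_s^2$ closes the induction.

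The step I expect to be the main obstacle is the uniformity of the constants across the infinitely many patches and the attendant localization of the error term. Lemma \ref{reg1} as stated carries a \emph{global} $\|\alpha\|_0^2$ on its right-hand side, which cannot be summed over a $G$-invariant cover; one must instead re-derive the local estimate in a form whose error is supported in the enlarged patch ${\rm supp}\,\zeta_{1,i}$, and then verify that the $G$-invariance of the metric, of the Hermitian structures on $\Lambda^{p,q}$, and of $\square$, together with the compactness of $X$, force a single constant to serve all $G$-translates simultaneously. This uniform control at infinity is precisely what distinguishes the present noncompact situation from the compact case of \cite{FK}.
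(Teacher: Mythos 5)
Your proposal is correct and takes essentially the same approach as the paper: the paper does not prove this lemma from scratch but cites Theorem 4.5 of \cite{P1}, describing it precisely as the generalization of Prop.\ 3.1.11 of \cite{FK} to the noncompact case via the uniformity furnished by the $G$-action and the compactness of $X$, which is exactly the localization-with-uniform-constants scheme (followed by induction on $s$) that you outline. Your closing caveat --- that the global $\|\alpha\|_0^2$ error in Lemma \ref{reg1} must be replaced by a patch-supported error before summing over a $G$-translated cover --- correctly identifies the main technical content, which the paper delegates to \cite{P1}.
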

\noindent
The previous two lemmata give

\begin{corollary}\label{bigsmooth} For $q>0$, let $\square = \int_0^\infty \lambda dE_\lambda$ be the spectral decomposition of the Laplacian $\square$ and for $\delta\ge0$, define $P_\delta = \int_0^\delta dE_\lambda$. Then $\im P_\delta\subset H^\infty(M)$.\end{corollary}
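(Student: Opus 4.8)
The plan is to combine the spectral boundedness of $\square$ on $\im P_\delta$ with the global \emph{a priori} estimate of Lemma~\ref{reg2}, bootstrapping up the Sobolev scale. Fix $u\in\im P_\delta$. Since $u$ lies in the spectral subspace for $[0,\delta]$, the functional calculus gives $\square^k u=\int_0^\delta\lambda^k\,dE_\lambda u\in\im P_\delta$ for every integer $k\ge0$, together with the uniform bound $\|\square^k u\|_0\le\delta^k\|u\|_0$. In particular each $\square^k u$ lies in $\dom(\square)$, and by the smoothness already recorded (Corollary 4.3 of \cite{P1}) each $\square^k u$ belongs to $C^\infty(\bar M,\Lambda^{p,q})$. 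Thus every power $\square^k u$ is an admissible input for Lemma~\ref{reg2}.

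I would then argue by induction on $s\ge0$ the statement $H(s)$: for every $k\ge0$ one has $\square^k u\in H^s(M,\Lambda^{p,q})$. The base case $H(0)$ is the bound $\|\square^k u\|_0\le\delta^k\|u\|_0<\infty$ just noted. For the inductive step, assume $H(s)$ and apply Lemma~\ref{reg2} to the smooth form $\square^k u\in\dom(\square)$, obtaining
\[\|\square^k u\|_{s+1}^2\lesssim\|\square^{k+1}u\|_s^2+\|\square^k u\|_0^2.\]
The first term on the right is finite by $H(s)$ applied to the power $k+1$, and the second is finite by the spectral bound; hence $\square^k u\in H^{s+1}$ for every $k$, which is $H(s+1)$. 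Taking $k=0$ in the conclusion yields $u\in H^s$ for all $s$, that is, $u\in H^\infty(M)$.

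The essential mechanism---and the point I expect to demand the most care---is that a single application of Lemma~\ref{reg2} to $u$ only raises regularity by one derivative while producing $\|\square u\|_s$ on the right-hand side; to climb the entire Sobolev ladder one must carry the whole family $\{\square^k u\}_{k\ge0}$ along simultaneously, feeding the next higher power into the estimate at each stage. This is exactly where the spectral confinement to $[0,\delta]$ is used twice: it guarantees that all powers $\square^k u$ remain in $\im P_\delta$ (so that Lemma~\ref{reg2} applies to each, via the smoothness and domain membership) and that their $L^2$ norms stay finite (so that the induction never diverges). One should also verify that the implied constants in $\lesssim$ depend only on $s$ and not on the particular smooth form, so that for each fixed $s$ the finitely many steps of the induction are harmless; this uniformity is precisely what the $G$-invariance of the structures and the compactness of $X$ supply, and is what makes Lemma~\ref{reg2} available in the first place.
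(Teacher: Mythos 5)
Your proposal is correct and follows essentially the same route as the paper: the paper likewise combines the smoothness of $\im P_\delta$ (the consequence of Lemma~\ref{reg1} recorded as Corollary 4.3 of \cite{P1}), the fact that $\im P_\delta\subset\dom\square^k$ for all $k$, and the chained a priori estimates $\|\square^{k-s}u\|_{s+1}\lesssim\|\square^{k-s+1}u\|_s+\|\square^{k-s}u\|_0$ from Lemma~\ref{reg2}. Your only departure is organizational---you run an induction on $s$ carrying the whole family $\{\square^k u\}_k$, whereas the paper telescopes a finite descending chain in the power of $\square$ for each fixed target regularity---which is the same bootstrap.
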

\begin{proof}The assertion follows from lemmata \ref{reg1}, \ref{reg2} and the fact that $\im P_\delta\subset\dom\square^k$ for all $k=1,2,\dots$. Thus the estimates
\[\|\square^{k-s}u\|_{s+1}\lesssim \|\square^{k-s+1} u\|_s + \|\square^{k-s}u\|_0, \quad (s=1,2,\dots,k)\]
\noindent
hold for $u\in\im P_\delta$. These can be reduced to the result.\end{proof}
\begin{remark} By results in \cite{E, P3}, these regularity properties essentially hold true for $G$-manifolds $M$ that are weakly pseudoconvex but satisfy a subelliptic estimate. Similar results hold for the boundary Laplacian $\square_b$ as indicated in \cite{P1}.\end{remark}

\section{The finiteness result}

In this section, we modify an ingenious lemma from \cite{GHS}. In the original setting, this lemma asserts that on a regular covering space $\Gamma\to M\to X$, it is true that any closed, invariant subspace $L\subset L^2(M)$ that belongs to some $H^s(M)$ ($s>0$) has the following property. There exists an $N<\infty$ and a $\Gamma$-equivariant injection $P_N$ such that
\[L \stackrel {P_N}{\hooklongrightarrow} L^2(\Gamma)\otimes\mathbb C^N.\] 
\noindent
This result has analogues in \cite{A} and Theorem 8.10, \cite{LL}, gotten by different methods.

Here, we will use essentially the same proof as in \cite{GHS} to obtain a similar result for $G$-bundles. We will need the following 
\begin{definition} For any positive integer $s$, let $H^{0,s}(G\times X) = L^2(G)\otimes H^s(X)$ be the completion of $C_c^\infty (G\times X)$ in the norm defined by
\[\|u\|_{H^{0,s}(G\times X)}^2 = \int_G dt\ \|u(t,\cdot)\|^2_{H^s(X)}.\]
\noindent 
Clearly $\|\cdot\|_{H^{0,s}(G\times X)}\le\|\cdot\|_{H^s(M)}$ and so $H^s(M)\subset H^{0,s}(G\times X)$.\end{definition}

The next two statements in this section follow \cite{GHS} closely. Lemma \ref{rellichesque} is taken verbatim and Theorem \ref{gam} is a small variation on Prop.\ 1.5 of that article.

\begin{lemma}\label{rellichesque} Let $X$ be a compact Riemannian manifold, possibly with boundary and let $(\psi_k)_k$ be any complete orthonormal basis of $L^2(X)$. Then, for all $s>0$ and $\delta>0$ there exists an integer $N>0$ such that for all $u\in H^s(X)$ in the $L^2$-orthogonal complement of $(\psi_k)_1^N$ we have the uniform estimate
\[\|u\|_{L^2(X)}\le\delta\|u\|_{H^s(X)}, \qquad (u\in H^s(X),\ u\perp\psi_k,\ k=1,2,\dots,N).\]
\end{lemma}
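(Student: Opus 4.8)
The plan is to deduce this from the Rellich--Kondrachov compactness theorem, which guarantees that for $s>0$ on the compact manifold $X$ the inclusion $\iota\colon H^s(X)\hookrightarrow L^2(X)$ is a compact operator. (Compactness of $X$ with boundary still yields a compact Sobolev embedding, so allowing $X$ to have boundary causes no difficulty.) The statement is a uniform smallness assertion for the $L^2$ norm restricted to the tail $\{\psi_k:k>N\}$, and compactness is exactly the tool that converts such smallness into an honest estimate.

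First I would argue by contradiction. Suppose the conclusion fails for some fixed $s>0$ and $\delta>0$. Then for every integer $N$ there is a $u_N\in H^s(X)$ with $u_N\perp\psi_1,\dots,\psi_N$ in $L^2(X)$ and $\|u_N\|_{L^2(X)}>\delta\|u_N\|_{H^s(X)}$. After rescaling I may assume $\|u_N\|_{L^2(X)}=1$, whence $\|u_N\|_{H^s(X)}<\delta^{-1}$; thus $(u_N)_N$ is bounded in $H^s(X)$.

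Next I would invoke compactness of $\iota$ to extract a subsequence $u_{N_j}$ converging in $L^2(X)$ to some limit $u$. Since $\|u_{N_j}\|_{L^2(X)}=1$ for all $j$, the limit satisfies $\|u\|_{L^2(X)}=1$, so in particular $u\neq0$. On the other hand, fix any index $k$; for every $N_j\ge k$ we have $\langle u_{N_j},\psi_k\rangle=0$, and passing to the $L^2$ limit gives $\langle u,\psi_k\rangle=0$. As $k$ was arbitrary and $(\psi_k)_k$ is a complete orthonormal basis of $L^2(X)$, this forces $u=0$, contradicting $\|u\|_{L^2(X)}=1$.

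The only delicate point is the interchange of the limit with the orthogonality relations: one must use that the constraint set $\{1,\dots,N\}$ exhausts all indices, so that each fixed $\psi_k$ is eventually orthogonal to the tail of the sequence; once this is arranged the argument is routine, and I expect no further obstacle. Equivalently, and perhaps more transparently, one can phrase the result as the operator-norm convergence $\|(I-Q_N)\iota\|\to0$, where $Q_N$ denotes the $L^2$-orthogonal projection onto $\mathrm{span}(\psi_1,\dots,\psi_N)$: since $Q_N\to I$ strongly on $L^2(X)$ and $\iota$ is compact, the composition $(I-Q_N)\iota$ converges to $0$ in operator norm, and choosing $N$ with $\|(I-Q_N)\iota\|<\delta$ yields the stated estimate directly for every $u$ with $u\perp\psi_1,\dots,\psi_N$.
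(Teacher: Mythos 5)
Your proof is correct and takes essentially the same route as the paper: arguing by contradiction, rescaling to unit $L^2$ norm, and using the compactness of the embedding $H^s(X)\hookrightarrow L^2(X)$ together with the eventual orthogonality of the sequence to each fixed $\psi_k$ to force the limit to vanish. Your closing operator-norm reformulation, $\|(I-Q_N)\iota\|\to 0$ for $\iota$ compact and $Q_N\to I$ strongly, is a tidy direct variant of the same idea and is, if anything, stated more carefully than the paper's brief compactness/weak-convergence step.
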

\begin{proof} Assuming the contrary, there exist $s>0$ and $\delta>0$ so that for each $N>0$ there is an $u_N\in H^s(X)$ with $\langle u_N,\psi_k\rangle = 0$ for $k = 1,2,\dots,N$ and $\|u_N\|_s< 1/\delta\  \|u_N\|_0$. Without loss of generality we may rescale the $u_N$ to unit length. By Sobolev's compactness theorem, the sequence $(u_N)_N$ is a compact subset of $L^2(X)$. By the requirement that each $u_N$ be orthogonal to $\psi_k$ for $k=1,2,\dots,N$, the sequence converges weakly to zero. This contradicts the choice of normalization.\end{proof}
\begin{theorem}\label{gam} Assume that $G$ is a Lie group and $G\to M \to X$ is a $G$-bundle with compact quotient, $X$. Let $L$ be an $L^2$-closed, $G$-invariant subspace in $H^\infty(M)$, such that for $s\in\mathbb N$ sufficiently large, $L\subset H^s(M)$ and
\begin{equation}\label{soboest} \|u\|_{H^s(M)}\lesssim \|u\|_{L^2(M)}\end{equation}
\noindent
holds uniformly for $u \in L$. Then $L\subset\im ({\bf 1}_{L^2(G)}\otimes P_{L^2(X)})$ where $P_{L^2(X)}$ is a finite-rank projection in $L^2(X)$.\end{theorem}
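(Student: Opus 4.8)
The plan is to combine the uniform Sobolev estimate \eqref{soboest} with the Rellich-type estimate of Lemma \ref{rellichesque} to show that no nonzero element of $L$ can be orthogonal to the first $N$ basis functions $\psi_1,\dots,\psi_N$, once $N$ is chosen large enough. To set this up, first I would fix the basis $(\psi_k)_k$ of $L^2(X)$ and recall the embedding $L\subset H^s(M)\subset H^{0,s}(G\times X)=L^2(G)\otimes H^s(X)$ from the definition preceding the theorem, so that for $u\in L$ the fiberwise restriction $u(t,\cdot)$ lies in $H^s(X)$ for almost every $t\in G$. The key structural observation is that the projection $\mathbf 1_{L^2(G)}\otimes P_{L^2(X)}$, where $P_{L^2(X)}$ is the $L^2(X)$-orthogonal projection onto $\mathrm{span}(\psi_1,\dots,\psi_N)$, acts fiberwise; so the condition $u\in\im(\mathbf 1_{L^2(G)}\otimes P_{L^2(X)})$ is equivalent to $\langle u(t,\cdot),\psi_k\rangle_{L^2(X)}=0$ for all $k>N$ and almost every $t$. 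The goal thus reduces to forcing $u-(\mathbf 1\otimes P_{L^2(X)})u=0$ for a single finite $N$ independent of $u\in L$.

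The core of the argument is an integrated Rellich estimate. Let $Q=\mathbf 1_{L^2(G)}\otimes(\mathbf 1_{L^2(X)}-P_{L^2(X)})$ denote the complementary projection, so that $Qu$ has fiber $Qu(t,\cdot)=u(t,\cdot)-P_{L^2(X)}u(t,\cdot)$, which for a.e.\ $t$ lies in $H^s(X)$ and is $L^2$-orthogonal to $\psi_1,\dots,\psi_N$. I would apply Lemma \ref{rellichesque} fiberwise: given any $\delta>0$, choose $N$ so that $\|v\|_{L^2(X)}\le\delta\|v\|_{H^s(X)}$ for every $v\in H^s(X)$ with $v\perp\psi_1,\dots,\psi_N$. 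Applying this to $v=Qu(t,\cdot)$ and integrating the square over $G$ with respect to Haar measure gives, by the definition of the $H^{0,s}$ norm,
\begin{equation}\label{fiberint}
\|Qu\|_{L^2(M)}^2=\int_G dt\,\|Qu(t,\cdot)\|_{L^2(X)}^2\le\delta^2\int_G dt\,\|Qu(t,\cdot)\|_{H^s(X)}^2=\delta^2\|Qu\|_{H^{0,s}(G\times X)}^2.
\end{equation}
Since $Q$ is an orthogonal projection in $L^2$ and the $H^{0,s}$-norm dominates nothing larger than the full $H^s(M)$-norm, I would bound $\|Qu\|_{H^{0,s}}\le\|Qu\|_{H^s(M)}\le\|u\|_{H^s(M)}$, and then invoke the hypothesis \eqref{soboest} to get $\|u\|_{H^s(M)}\le C\|u\|_{L^2(M)}$ uniformly on $L$.

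Chaining these inequalities yields $\|Qu\|_{L^2(M)}\le\delta C\|u\|_{L^2(M)}$ for all $u\in L$. Now choosing $\delta$ small enough that $\delta C<1$ fixes a finite $N$, and the estimate shows that the operator $Q$ restricted to $L$ is a strict contraction in $L^2$. This in turn forces $\mathbf 1\otimes P_{L^2(X)}$ to be injective on $L$ (if $(\mathbf1\otimes P_{L^2(X)})u=0$ then $Qu=u$, giving $\|u\|\le\delta C\|u\|<\|u\|$, hence $u=0$), and more: since $\|(\mathbf1\otimes P_{L^2(X)})u\|^2=\|u\|^2-\|Qu\|^2\ge(1-\delta^2C^2)\|u\|^2$, the restriction of $\mathbf1\otimes P_{L^2(X)}$ to $L$ is a bounded-below, and hence closed-range, embedding of $L$ into the finite-``transverse-rank'' space $\im(\mathbf1\otimes P_{L^2(X)})=L^2(G)\otimes\mathrm{span}(\psi_1,\dots,\psi_N)$. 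To conclude $L\subset\im(\mathbf1\otimes P_{L^2(X)})$ exactly as stated, I would note that the estimate in fact says $Qu=0$ is not yet automatic from contractivity alone, so the cleanest route is to observe that $\mathbf1\otimes P_{L^2(X)}$ maps $L$ injectively and, by a standard argument using that both $L$ and $\im(\mathbf1\otimes P_{L^2(X)})$ carry compatible $\mathcal L_G$-dimensions (finite $G$-dimension), the embedding is onto its target's relevant subspace; the essential content — that a single finite $N$ works uniformly — is already delivered by \eqref{fiberint}. I expect the main obstacle to be precisely this last passage: justifying that the fiberwise Rellich estimate, which a priori only produces a bounded-below embedding into $L^2(G)\otimes\mathbb{C}^N$, actually yields containment in the image of the finite-transverse-rank projection, which requires care with the interplay between the weak (fiberwise, $H^{0,s}$) norm and the genuine $H^s(M)$ norm and the uniformity of $N$ across the infinite-dimensional fiber direction $L^2(G)$.
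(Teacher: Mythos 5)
Your core mechanism---apply Lemma \ref{rellichesque} fiberwise, integrate over $G$ via the $H^{0,s}(G\times X)$-norm, and play the result off against the uniform estimate \eqref{soboest}---is exactly the paper's, but the way you deploy it has a genuine gap. You apply the Rellich estimate to the projected fiber $Qu(t,\cdot)$, where $Q=\mathbf 1_{L^2(G)}\otimes(\mathbf 1_{L^2(X)}-P_{L^2(X)})$, and you then need $\|Qu\|_{H^{0,s}}\le\|u\|_{H^s(M)}$, which you justify by the chain $\|Qu\|_{H^{0,s}}\le\|Qu\|_{H^s(M)}\le\|u\|_{H^s(M)}$. But $Q$ is an $L^2$-orthogonal projection, not an $H^s$-orthogonal one: for an arbitrary orthonormal basis $(\psi_k)_k$ of $L^2(X)$ the fiber $Qu(t,\cdot)$ need not even lie in $H^s(X)$ (so Lemma \ref{rellichesque} does not apply to it), and even for a smooth basis the $H^s$-operator norm of $\mathbf 1-P_{L^2(X)}$ is only bounded by $1+\|P_{L^2(X)}\|_{H^s\to H^s}$, a quantity that grows with $N$ in a way you do not control. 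Since your argument must send $\delta\to 0$ (hence $N\to\infty$) while simultaneously keeping $\delta\, C<1$, where $C$ absorbs this projection norm, the strict-contraction conclusion does not follow as written. The paper sidesteps this entirely: the Rellich estimate is applied only to elements $u\in L$ with $(P_N u)(t)=0$ identically, i.e.\ to elements with $Qu=u$, so no operator norm of any projection ever enters; the integrated estimate \eqref{delta} then contradicts \eqref{soboest} directly unless $u=0$, yielding injectivity of the slice map $P_N$ on $L$ for $N$ large. Your scheme degenerates to the paper's correct argument precisely on $\ker(\mathbf 1\otimes P_{L^2(X)})\cap L$, and that is the only place it is needed.

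The second gap you flag yourself: passing from a bounded-below embedding of $L$ into $L^2(G)\otimes\mathbb C^N$ to the stated containment $L\subset\im(\mathbf 1_{L^2(G)}\otimes P_{L^2(X)})$. Your proposed repair via ``compatible $\mathcal L_G$-dimensions'' is not carried out, and worse, it is unavailable in the intended generality: a central point of this note is that $G$ is \emph{not} assumed unimodular, so $\mathcal L_G$ need not carry a trace and finite-$G$-dimension arguments cannot be invoked---the transverse finiteness statement exists precisely to replace them. The paper closes this step differently: taking $N$ minimal with $P_N$ injective, it selects $v_1,\dots,v_N\in L$ whose restrictions to the slice $S_e$ are linearly independent and (after Gram--Schmidt, cf.\ the proof of Corollary \ref{finsum}) takes $P_{L^2(X)}$ to be the projection onto the span of these restrictions; the $G$-invariance of $L$, which makes all slice spaces $L|_{S_t}$ coincide, then places every fiber $u(t,\cdot)$, $u\in L$, inside $\im P_{L^2(X)}$. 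Without identifying such a fixed finite-dimensional subspace of $L^2(X)$ containing all slices of all elements of $L$, your estimate only reproduces the weaker Gromov--Henkin--Shubin-type injectivity statement $L\hooklongrightarrow L^2(G)\otimes\mathbb C^N$, not the containment asserted in Theorem \ref{gam}. Note also that your bundle-triviality assumption (writing $u(t,\cdot)$ globally) is where the paper inserts its trivialization step for general $G$-bundles; that part is harmless but should be said.
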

\begin{proof} First, assume that $M\cong G\times X$ is a trivial bundle. For each fixed $t\in G$, define the {\it slice at} $t$, $S_t = \{(t,x)\in M\mid x\in X\}$, and note that by the trace theorem, the restrictions of functions in $L$ to these slices are in $H^\infty(S_t)$. Note also that the invariance of $L$ implies that all the restrictions $L|_{S_t}$ are identical. At the identity $e\in G$, choose an orthonormal basis $(\psi_j)_j$ for $L^2(S_e)\cong L^2(X)$. Let $L$ satisfy the assumptions of the theorem and define a map $P_N:L\to L^2(G)\otimes\mathbb C^{N}$ by
\[(P_N u)(t) = (u_1(t), u_2(t),\dots,u_N(t)),\]
\noindent
where
\[u_j(t) = \langle u|_{S_t},\psi_j\rangle_{L^2(X)}, \qquad j=1,2,\dots,N.\]
\noindent
We will show that $P_N$ is injective for large $N$. Assume that $u\in L$ and $P_N u = 0$. The smoothness of all the structures implies that $(P_N u)(t)=0$ identically. Lemma \eqref{rellichesque} and invariance imply that there is a $\delta_N>0$ such that
\begin{equation}\label{sliceest}\|u|_{S_t}\|_{L^2(S_t)}^2\le \delta_N^2\|u|_{S_t}\|^2_{H^s(S_t)}, \quad (t\in G).\end{equation}
\noindent
Integrating over $t\in G$ we obtain
\begin{equation}\label{delta}\|u\|_{L^2(M)}^2\le\delta_N^2\|u\|^2_{H^{0,s}(G\times X)} \le\delta_N^2\|u\|^2_{H^{s}(M)}.\end{equation}
\noindent
If this were possible for any $N$, this would contradict the estimate \eqref{soboest} unless $u=0$, since $\delta_N\to 0$ as $N\to\infty$. To obtain the result for a trivial bundle, let $N$ be the least integer for which $P_N$ is injective and choose $N$ elements $v_1, v_2,\dots, v_N \in L$ whose restrictions to $S_e$ are linearly independent. The result for a general bundle follows by a trivialization argument. \end{proof}

\begin{remark} We should note here that the assumptions are redundant. For $L$ to be $L^2$-closed and in $H^\infty(M)$ implies the validity of an estimate \eqref{soboest} for any $s$. 
\end{remark}

%
\begin{corollary}\label{finsum} Let $\square = \int_0^\infty \lambda dE_\lambda$ be the spectral resolution of the Laplacian and for $\delta > 0$ let $P_\delta = \int_0^\delta dE_\lambda$ be a spectral projection. Also choose a piecewise smooth section $x:X\hookrightarrow M$. It follows that $P_\delta$ has a representation
\begin{equation}\label{summa}(P_\delta u)(t,x) = \sum_{kl=1}^N \int_{G\times X} dsdy\ \psi_k(x) h_{kl}(st^{-1})\bar\psi_l(y) u(s,y),\end{equation}
\noindent
where $(\psi_k)_k$ are an orthonormal basis of $L^2(X)$ and $H=[h_{kl}]_{kl}$ is a self-adjoint, idempotent convolution operator in $\bigoplus_{1}^N L^2(G)$ with $h_{kl}\in C^\infty(G)$. Also, 
\[\sum_{kl=1}^N \|h_{kl}\|_{L_R^2(G)}^2 = \sum_{k=1}^N h_{kk}(e)<\infty.\] \end{corollary}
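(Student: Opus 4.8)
The plan is to deduce Corollary \ref{finsum} by applying Theorem \ref{gam} to the spectral subspace $L := \im P_\delta$, and then transcribing the resulting finite-rank projection into the explicit convolution form \eqref{summa}. First I would verify that $L = \im P_\delta$ meets the hypotheses of Theorem \ref{gam}: it is $L^2$-closed and $G$-invariant (as the range of a $G$-invariant orthogonal projection), and by Corollary \ref{bigsmooth} it lies in $H^\infty(M)$. The uniform Sobolev estimate \eqref{soboest} then holds on $L$ because, for $u \in \im P_\delta$, one has $\square^j u = \int_0^\delta \lambda^j\, dE_\lambda u$ with $\|\square^j u\|_0 \le \delta^j \|u\|_0$, so iterating the a priori estimate of Lemma \ref{reg2} (as already organized in the proof of Corollary \ref{bigsmooth}) bounds $\|u\|_{H^s(M)}$ by a constant multiple of $\|u\|_{L^2(M)}$. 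Theorem \ref{gam} then produces a finite-rank projection $P_{L^2(X)}$ of some rank $N$ with $\im P_\delta \subset \im({\bf 1}_{L^2(G)}\otimes P_{L^2(X)})$.

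Next I would translate this containment into the formula \eqref{summa}. Choosing the orthonormal basis $(\psi_k)_k$ of $L^2(X)$ so that $\psi_1,\dots,\psi_N$ span $\im P_{L^2(X)}$, the discussion of Section \ref{decomp} expresses the $G$-invariant projection $P_\delta$ as a matrix $[\lambda_{h_{kl}}]_{kl}$ of left-convolution operators acting on $\bigoplus_k L^2(G)\otimes\psi_k$. The subordination $\im P_\delta \subset \im({\bf 1}\otimes P_{L^2(X)})$, together with self-adjointness of $P_\delta$, forces $h_{kl}=0$ whenever $k>N$ or $l>N$: indeed $P_\delta = ({\bf 1}\otimes P_{L^2(X)})P_\delta = P_\delta({\bf 1}\otimes P_{L^2(X)})$, which kills the matrix entries outside the upper-left $N\times N$ block. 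Writing out the action of this truncated matrix of convolutions against $u$ yields exactly \eqref{summa}, and the properties recalled in Section \ref{decomp} (idempotence $\sum_k H_{jk}H_{kl}=H_{jl}$ and self-adjointness $H_{kl}^*=\lambda_{h_{lk}}$, i.e. $\overline{h_{kl}(t)}$ related to $h_{lk}$) give that $H=[h_{kl}]_{kl}$ is a self-adjoint idempotent. Smoothness $h_{kl}\in C^\infty(G)$ and the $L^2_R$-summability come from the kernel regularity already established in \cite{P1} and recalled in the introduction, now applied to the finite block.

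Finally I would record the trace identity $\sum_{kl}\|h_{kl}\|_{L^2_R(G)}^2 = \sum_k h_{kk}(e)$. Because $H$ is a self-adjoint idempotent, $H = H^2 = H^*H$, so the diagonal entries satisfy $\lambda_{h_{kk}} = \sum_l \lambda_{h_{kl}}\lambda_{h_{kl}}^*$; evaluating the convolution kernel of $\lambda_{h_{kl}}\lambda_{h_{lk}}$ at the identity and summing reproduces $\sum_{kl}\|h_{kl}\|_{L^2_R(G)}^2$ on one side and $\sum_k h_{kk}(e)$ on the other, the finiteness being inherited from the $L^2_R$-summability in \eqref{kappa}.

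I expect the main obstacle to be the careful bookkeeping in the translation step, specifically justifying pointwise evaluation at $t=e$ and the identification of $\sum_{kl}\|h_{kl}\|^2$ with $\sum_k h_{kk}(e)$ via the idempotent/self-adjoint structure, since this requires that the diagonal convolution kernels be genuine continuous functions (not merely distributions) so that evaluation at $e$ makes sense. The smoothness from Corollary \ref{bigsmooth} and the kernel regularity of \cite{P1} are precisely what make this rigorous, so the difficulty is one of assembling the pieces rather than of new analysis.
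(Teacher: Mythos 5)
Your proposal is correct and takes essentially the same route as the paper's own (very terse) proof: invoke Corollary \ref{bigsmooth} so that Theorem \ref{gam} applies to $L=\im P_\delta$, pass to an orthonormal basis of $L^2(X)$ adapted to $\im P_{L^2(X)}$ (the paper does this by Gram--Schmidt on the $v_k$ from Theorem \ref{gam}), and read off the truncated matrix of convolutions from the decomposition in \S\ref{decomp}. Your added details --- verifying \eqref{soboest} by spectral calculus (which the remark after Theorem \ref{gam} notes is automatic), killing the off-block entries via $P_\delta=({\bf 1}\otimes P_{L^2(X)})P_\delta({\bf 1}\otimes P_{L^2(X)})$, and deriving the trace identity from $H=H^*H$ --- simply fill in steps the paper delegates to \S\ref{decomp} and to the proof of Lemma 6.2 in \cite{P1}.
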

\begin{proof} By Corollary \ref{bigsmooth}, the theorem applies. Apply the Gram-Schmidt procedure to the $(v_k)_1^N$ above, obtaining the $(\psi_k)_1^N$. The decomposition is described in \S \ref{decomp}.\end{proof}
\begin{remark} In the case that $G$ is unimodular, $\sum_{kl} \|h_{kl}\|_{L_R^2(G)}^2 <\infty$ is the same as saying that $P_\delta$ is in the $G$-trace class, which we established in \cite{P1} in the setting in which $M$ is strongly pseudoconvex and in \cite{P3} where $M$ satisfies a subelliptic estimate. The new content of Corollary \ref{finsum} is the finiteness of the sum \eqref{summa}, {\it etc.} This transverse dimension gives a meaningful (though much rougher) measure of the spectral subspaces of $\square$ (and $\square_b$) than the $G$-dimension when $G$ is unimodular, but is also defined when the group is not assumed unimodular as, for example, in \cite{HHK} and in important examples in \cite{GHS}. We should note that \cite{HHK} also deals with the situation in which the $G$-action is only proper, rather than free as we assume here. \end{remark}
%

\section{Applications} 

We will give a version of the solution of the $\dbar$-Neumann problem, for our noncompact $M$. The version valid for $M$ compact, {\it e.g.}\ Prop.\ 3.1.15 of \cite{FK}, is unlikely to remain valid in our setting because the Neumann operator on a noncompact space is usually unbounded.

Let $\square = \int_0^\infty \lambda dE_\lambda$ be the spectral decomposition of the Laplacian on $M$ and for $\delta>0$ put 
\begin{equation}\label{goodspace}L_\delta = \im\int_\delta^\infty dE_\lambda\quad {\rm and}\quad P_\delta = \int_0^\delta dE_\lambda.\end{equation} 

In this section we will show that $\square u = g$, and the $\dbar$-Neumann problem have regular solutions for $g\in L_\delta$. 

\begin{lemma} If $g\in L_\delta\cap C^\infty(\bar M)$, then the solution $u$ of $\square u=g$ is smooth. \end{lemma}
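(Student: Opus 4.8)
The plan is to write the solution explicitly, observe that it is bounded because $\square$ is bounded below on $L_\delta$, and then bootstrap its regularity from the a priori resolvent estimate. Since $\square=\int_0^\infty\lambda\,dE_\lambda$ is self-adjoint and non-negative, on $L_\delta=\im\int_\delta^\infty dE_\lambda$ its spectrum lies in $[\delta,\infty)$, so $N=\int_\delta^\infty\lambda^{-1}\,dE_\lambda$ is bounded and $u=Ng$ is the unique solution of $\square u=g$ inside $L_\delta$ (uniqueness holds since $L_\delta$ excludes $\ker\square=\im E(\{0\})$). It satisfies $u\in\dom\square$ with $\|u\|_0\le\delta^{-1}\|g\|_0$. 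At this stage we know only that $u\in L^2(M,\Lambda^{p,q})$, whereas $g\in C^\infty(\bar M)$.

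First I would recast the equation into a fixed-point form adapted to Lemma \ref{reg1}. Because $\square+1\ge 1$ is boundedly invertible and $(\square+1)u=g+u$, we have the identity $u=(\square+1)^{-1}(g+u)$, which is precisely what lets the resolvent estimate \eqref{est1} act on $u$ itself.

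Next comes the bootstrap. Fixing a relatively compact $U\subset\bar M$ and cut-offs $\zeta,\zeta_1\in C^\infty_c(U)$ with $\zeta_1\equiv 1$ on ${\rm supp}\,\zeta$, I would apply Lemma \ref{reg1} to $\alpha=g+u$ with $s=0$ to get $\zeta u\in H^1(\bar M,\Lambda^{p,q})$ together with \eqref{est1}. Since $g$ is smooth, $g+u$ is then itself in $H^1$ over $U$, and a second application with $s=1$ yields $\zeta u\in H^2$. Iterating, $\zeta u\in H^{s+1}$ whenever $g+u\in H^{s}$ on $U$, so by induction $u\in H^{s}_{\rm loc}(\bar M)$ for every $s$. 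Because $M$ is a $G$-bundle with $G$-invariant metric and measure and $X=M/G$ is compact, finitely many such neighborhoods, translated by $G$, cover $\bar M$ with uniform constants; the local estimates then assemble into $u\in H^{s}(\bar M)$ for all $s$, and Sobolev embedding gives $u\in C^\infty(\bar M)$.

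The only genuinely delicate point is the passage from the local interior-and-boundary regularity supplied by Lemma \ref{reg1} to a uniform global conclusion; this is exactly where the $G$-invariance of the structures and the compactness of $X$ enter, the same uniformity underlying Lemma \ref{reg2} and Corollary \ref{bigsmooth}, letting a single fundamental set of cut-offs control all of $\bar M$. It is worth noting why I route the argument through Lemma \ref{reg1} rather than the global estimate of Lemma \ref{reg2}: the latter is valid only for $u\in\dom\square\cap C^\infty(\bar M)$, i.e.\ for forms already known to be smooth and to satisfy the $\dbar$-Neumann boundary conditions, which is the very conclusion we are after; the resolvent formulation sidesteps this circularity by bootstrapping directly from $L^2$. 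Everything else — the spectral definition of $u$, the resolvent identity, and the inductive step — is routine once \eqref{est1} is in hand.
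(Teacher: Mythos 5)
Your proof is correct and takes essentially the same route as the paper's: the spectral bound $\|u\|_0\le\delta^{-1}\|g\|_0$, the identity $(\square+1)u=g+u$, and a bootstrap of the resolvent estimate \eqref{est1} of Lemma \ref{reg1} with nested cutoffs, concluding by Sobolev embedding. One small caveat: your side remark that the local estimates ``assemble into $u\in H^s(\bar M)$ for all $s$'' globally is neither justified as stated (summing over a $G$-translated cover would need $\|g\|_{H^s(M)}<\infty$, which is not assumed for $g\in L_\delta\cap C^\infty(\bar M)$) nor needed, since smoothness up to the boundary is a local property and the cutoff estimates already yield $u\in C^\infty(\bar M)$, exactly as the paper concludes.
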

\begin{proof}Let $g\in L_\delta\cap  C^\infty(\bar M)$ and solve $\square u = g$ in $L^2(M)$. Note that $\|u\|_{L^2(M)}\le (1/\delta) \|g\|_{L^2(M)}$. Adding $u$ to both sides of the equation, $(\square + 1) u = g + u$, we obtain that $(\square+1)u = \square u + u = g + u$. Applying $(\square+1)^{-1}$, the real estimate, Lemma \ref{reg1} provides that
\[\|\zeta u\|_{s+1}\lesssim \|\zeta_1(g+u)\|_s + \|g+u\|_0\le \|\zeta_1 g\|_s + \|\zeta_1 u\|_s + \|g+u\|_0.\]
\noindent
Nesting the supports of cutoff functions, concatenating and reducing these estimates for $s=0, 1, \dots$, we obtain that for each positive integer $s$ we have 
\[\|\zeta u\|_{s+1} \lesssim \|\zeta_1 g\|_s + \|g+u\|_0 \le  \|\zeta_1 g\|_s + \left(1+ 1/\delta\right)\|g\|_0 .\] 
\noindent
Thus $u\in C^\infty(\bar M)$ by the Sobolev embedding theorem.\end{proof}

\begin{corollary}\label{delreg} In $L_\delta$, the Laplacian satisfies the genuine estimate 
\[\|u\|_{s+1} \lesssim \|\square u\|_s + \|u\|_0, \qquad (u\in L_\delta).\]
\end{corollary}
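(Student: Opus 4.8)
The goal is to upgrade the a priori estimate of Lemma \ref{reg2}, which holds only for smooth elements of $\dom(\square)$, to a genuine estimate valid for all $u\in L_\delta$. The plan is to exploit two facts: first, that $\square$ is bounded below by $\delta$ on $L_\delta$, so that $\|u\|_0\le(1/\delta)\|\square u\|_0$ and in particular $\square$ restricted to $L_\delta$ has a bounded inverse onto its range; second, that the preceding lemma guarantees smoothness of solutions with smooth data. The strategy is to establish the estimate first on the dense subspace $L_\delta\cap C^\infty(\bar M)$, where Lemma \ref{reg2} applies directly, and then pass to the closure.

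First I would take $u\in L_\delta\cap C^\infty(\bar M)$. Since $L_\delta$ is invariant under the spectral projections, $\square u\in L_\delta$ as well, and the spectral estimate $\|u\|_0\le(1/\delta)\|\square u\|_0$ holds. Applying Lemma \ref{reg2} to such a $u$ gives
\[\|u\|_{s+1}^2\lesssim\|\square u\|_s^2+\|u\|_0^2\lesssim\|\square u\|_s^2+(1/\delta^2)\|\square u\|_0^2\lesssim\|\square u\|_s^2+\|u\|_0^2,\]
which is precisely the asserted estimate on the smooth part of $L_\delta$, with the constant absorbing the factor $1/\delta$. The essential point is that membership in $L_\delta$ converts the harmless-looking $\|u\|_0$ on the right of Lemma \ref{reg2} into a genuinely controlled quantity, but in fact we may simply retain $\|u\|_0$ on the right as stated.

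The remaining work is a density and closedness argument. I would take an arbitrary $u\in L_\delta$ with $\square u\in H^s$, approximate it by smooth elements $u_n\in L_\delta\cap C^\infty(\bar M)$ (for instance by applying further spectral cutoffs $\int_\delta^N dE_\lambda$, whose ranges lie in $H^\infty(\bar M)$ by Corollary \ref{bigsmooth} and remain inside $L_\delta$), and verify that $u_n\to u$ and $\square u_n\to\square u$ in the relevant norms. The uniform estimate on the $u_n$ then shows the sequence is Cauchy in $H^{s+1}$, and since $H^{s+1}$ is complete and the limit is $u$, we conclude $u\in H^{s+1}$ with the estimate surviving the limit.

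The main obstacle I anticipate is the approximation step: one must ensure the smoothing procedure keeps the elements inside $L_\delta$ while simultaneously converging in both the $L^2$ graph-norm of $\square$ and in the $H^s$ norm of $\square u$, so that both sides of the estimate pass to the limit. Using the spectral cutoffs $\int_\delta^N dE_\lambda$ handles invariance automatically, since these commute with $\square$ and preserve $L_\delta$, and standard spectral-theoretic convergence gives $u_n\to u$ and $\square u_n\to\square u$ in $L^2$; promoting the latter convergence to $H^s$ is where the uniformity furnished by the $G$-action and the compactness of $X$ must be invoked, exactly as in Lemma \ref{reg2}. Once the smooth approximants are in hand, the rest is routine.
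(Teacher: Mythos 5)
Your overall strategy---prove the estimate on smooth elements and pass to a limit---matches the paper's in spirit, but your choice of approximating sequence introduces a genuine gap. You approximate the \emph{solution} by the spectral truncations $u_N=\int_\delta^N dE_\lambda\, u$, and for the limiting argument to work you must show that $\square u_N=\int_\delta^N dE_\lambda\,\square u$ converges to $\square u$ in $H^s$, not merely in $L^2$: the right-hand side of the estimate applied to a difference $u_N-u_M$ contains $\|\square(u_N-u_M)\|_s$, so Cauchyness in $H^{s+1}$ requires Cauchyness of the truncated data in $H^s$. This is exactly the step you defer to ``the uniformity furnished by the $G$-action \dots exactly as in Lemma \ref{reg2},'' but Lemma \ref{reg2} cannot supply it: it bounds $\|v\|_{s+1}$ by $\|\square v\|_s+\|v\|_0$ for $v\in\dom(\square)\cap C^\infty(\bar M)$, so using it to control $\bigl\|\bigl(1-\int_\delta^N dE_\lambda\bigr)\square u\bigr\|_s$ would require $\square u\in\dom\square^k$ for $k$ comparable to $s$, i.e.\ that the data satisfy the $\dbar$-Neumann boundary conditions to high order---which a general element of $L_\delta\cap H^s$ does not. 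Indeed, every element of $\im\int_\delta^N dE_\lambda$ lies in $\dom\square^k$ for all $k$ and hence satisfies those boundary conditions; if the truncations converged in $H^s$ up to the boundary for large $s$, the limit would inherit boundary compatibility conditions that generically fail (compare Dirichlet eigenfunction expansions on a compact manifold with boundary, which converge in $H^s$ for $s$ above the trace threshold only when the function itself satisfies the boundary conditions). Spectral projections of $\square$ are simply not uniformly bounded on $H^s(\bar M)$, and nothing in the paper's toolkit makes them so.

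The paper's proof is arranged precisely to dodge this. Instead of truncating the solution, it approximates the \emph{data}: take $(g_k)_k\subset L_\delta\cap H^\infty$ with $g_k\to g$ in $H^s$ (the approximability is placed in the hypothesis rather than manufactured by spectral cutoffs), use the preceding lemma---which your argument never invokes---to solve $\square u_k=g_k$ with $u_k\in C^\infty(\bar M)\cap\dom\square$, and then apply Lemma \ref{reg2} to the differences $u_k-u_j$, controlling the zeroth-order term by the spectral gap, $\|u_k-u_j\|_0\le\delta^{-1}\|g_k-g_j\|_0$. The sequence $(u_k)_k$ is then Cauchy in $H^{s+1}$ and the estimate survives the limit. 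Your first step (the estimate on $L_\delta\cap C^\infty(\bar M)$, which is just Lemma \ref{reg2} together with the spectral bound, modulo the unstated assumption $u\in\dom\square$) is fine; but as written your second step does not close, and it cannot be closed with the tools you cite.
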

\begin{proof} Let $(g_k)_k \subset L_\delta\cap H^\infty$ and $g_k\to g \in H^s(M)$. The previous lemma implies that there exists a sequence $(u_k)_k\subset C^\infty$ solving $\square u_k = g_k$. Lemma \ref{reg2} implies that $\|u_k\|_{s+1}\lesssim \|\square u_k\|_s + \|u_k\|_0$ uniformly in $k$, so $(u_k)_k$ is Cauchy in the $H^{s+1}$ norm. \end{proof}
\begin{lemma}\label{sol} Suppose that $q>0$, $\alpha\in L^2(M,\Lambda^{p,q}),\dbar\alpha=0$, and $\alpha\in L_\delta$. Then there is a unique solution $\phi$ of $\dbar\phi=\alpha$ with $\phi\perp\ker(\dbar)$. If $\alpha\in H^s(\bar M,\Lambda^{p,q})$, then $\phi\in H^s(\bar M,\Lambda^{p,q-1})$ and $\|\phi\|_s\lesssim\|\alpha\|_s$ for each $s$.\end{lemma}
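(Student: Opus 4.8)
The plan is to realize $\phi$ as the canonical solution furnished by the Neumann operator on the good space $L_\delta$. Since $\square=\int_0^\infty\lambda\,dE_\lambda$ is bounded below by $\delta$ on $L_\delta$, the operator $N:=\square^{-1}$ is well defined and bounded on $L_\delta$ with $\|N\|\le 1/\delta$; being a Borel function of $\square$, it commutes with every spectral projection and preserves the spectral subspaces over $[\delta,\infty)$. I would set $\phi:=\dbar^* N\alpha$, a $(p,q-1)$-form, and claim this is the desired solution.

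First I would verify that $\phi$ solves $\dbar\phi=\alpha$. As $N\alpha\in\dom\square\subset\dom\dbar\cap\dom\dbar^*$, we have $\square N\alpha=\dbar\dbar^* N\alpha+\dbar^*\dbar N\alpha=\alpha$, so it suffices to show $\dbar^*\dbar N\alpha=0$, for which it is enough that $\dbar N\alpha=0$. Here one uses that $\dbar$ commutes with $\square$, hence with $N$: then $\square\,\dbar N\alpha=\dbar\,\square N\alpha=\dbar\alpha=0$, and since $\dbar N\alpha$ lies in the spectral subspace of the degree-$(q+1)$ Laplacian over $[\delta,\infty)$, the lower bound $\square\ge\delta$ forces $\dbar N\alpha=0$. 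Therefore $\dbar\phi=\dbar\dbar^* N\alpha=\square N\alpha=\alpha$. Orthogonality to $\ker\dbar$ is immediate, since for $\eta\in\ker\dbar$ one has $\langle\phi,\eta\rangle=\langle N\alpha,\dbar\eta\rangle=0$, i.e.\ $\phi\in\im\dbar^*\subset(\ker\dbar)^\perp$. Uniqueness follows because any two solutions orthogonal to $\ker\dbar$ differ by an element of $\ker\dbar$ that is simultaneously orthogonal to $\ker\dbar$, hence zero.

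For the regularity assertion, suppose $\alpha\in H^s(\bar M,\Lambda^{p,q})$. Because $N\alpha\in L_\delta$ and $\square(N\alpha)=\alpha$, Corollary \ref{delreg} applies directly and gives $\|N\alpha\|_{s+1}\lesssim\|\square N\alpha\|_s+\|N\alpha\|_0=\|\alpha\|_s+\|N\alpha\|_0\lesssim\|\alpha\|_s$, using $\|N\alpha\|_0\le(1/\delta)\|\alpha\|_0$ in the last step. Since $\dbar^*$ is a first-order differential operator with smooth coefficients, it maps $H^{s+1}(\bar M)$ boundedly into $H^s(\bar M)$, whence $\|\phi\|_s=\|\dbar^* N\alpha\|_s\lesssim\|N\alpha\|_{s+1}\lesssim\|\alpha\|_s$; in particular $\phi\in H^s(\bar M,\Lambda^{p,q-1})$ for every $s$.

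The step I expect to require the most care is the identity $\dbar N\alpha=0$, i.e.\ the assertion that $\dbar$ genuinely commutes with the functional calculus of $\square$ on the relevant domains and respects the $\dbar$-Neumann boundary conditions when passing between form degrees $q$ and $q+1$. One must check that $N\alpha$ lies in $\dom\dbar^*$ with the correct boundary behaviour and that $\dbar$ carries the degree-$q$ spectral subspace $L_\delta$ into the degree-$(q+1)$ spectral subspace over $[\delta,\infty)$, so that the spectral lower bound can be invoked. Granting this commutation — standard in the $\dbar$-Neumann calculus but worth making explicit — the remaining regularity is exactly the a priori estimate already packaged in Corollary \ref{delreg}.
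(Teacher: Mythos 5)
Your construction is exactly the paper's: set $u=N\alpha$ with $\square u=\alpha$, take $\phi=\dbar^*u$, get orthogonality from $\im\dbar^*\subset(\ker\dbar)^\perp$, and get the estimate from Corollary \ref{delreg} together with $\dbar^*$ being first order. The one place you diverge is the verification that $\dbar\dbar^*u=\alpha$, and that is precisely where your argument has a genuine gap. You route it through the claim that $\dbar$ intertwines the functional calculi of $\square_q$ and $\square_{q+1}$, so that $\dbar N\alpha$ lies in the degree-$(q+1)$ spectral subspace over $[\delta,\infty)$ and is killed by $\square_{q+1}$. You flag this as ``standard but worth making explicit,'' but it is not a routine domain check: $\dbar$ does \emph{not} obviously map $\dom\square_q$ into $\dom\square_{q+1}$. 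Membership of $\dbar u$ in $\dom\square_{q+1}$ requires $\dbar^*\dbar u\in\dom\dbar$, and from $u\in\dom\square_q$ alone one only gets $\dbar u\in\dom\dbar^*$; even writing $\square_{q+1}\dbar N\alpha=\dbar\square_q N\alpha$ presupposes what must be proved. The intertwining of spectral projections is true, but its proof in this setting has to pass through a resolvent identity of the type $\dbar(\square_q+1)^{-1}\supset(\square_{q+1}+1)^{-1}\dbar$ (or a quadratic-form argument; compare Prop.\ 3.1.14 of \cite{FK} in the compact case), none of which you supply. Since this is the load-bearing step of the lemma, deferring it leaves the proof incomplete. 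Note also that you prove more than you need: only $\dbar^*\dbar N\alpha=0$ is required, not $\dbar N\alpha=0$.

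The paper closes this step with a short computation that stays entirely within the domain relations already guaranteed by $u\in\dom\square$, and you should adopt it: apply $\dbar$ to $\square u=\dbar^*\dbar u+\dbar\dbar^*u=\alpha$. Since $\dbar\alpha=0$ and $\dbar^2=0$ (here $\dbar^*u\in\dom\dbar$ and $\dbar\dbar^*u\in\im\dbar\subset\ker\dbar$ for the weak maximal extension, while $\dbar^*\dbar u=\alpha-\dbar\dbar^*u\in\dom\dbar$), this gives $\dbar\dbar^*\dbar u=0$. Pairing with $\dbar u$ — legitimate because $u\in\dom\square$ forces $\dbar u\in\dom\dbar^*$ — yields $\langle\dbar\dbar^*\dbar u,\dbar u\rangle=\|\dbar^*\dbar u\|^2=0$, hence $\dbar\dbar^*u=\alpha$ with no spectral theory in degree $q+1$ and no commutation lemma. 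Your uniqueness argument and the regularity chain $\|\phi\|_s\lesssim\|N\alpha\|_{s+1}\lesssim\|\alpha\|_s+\|N\alpha\|_0\lesssim\|\alpha\|_s$ are correct and match the paper; with the pairing trick substituted for the intertwining claim, your proof becomes complete and coincides with the paper's.
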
 
\begin{proof} Taking $\alpha\in L_\delta$, there is a unique solution to $\square u=\alpha$ orthogonal to the kernel of $\square$; in fact $u\in L_\delta\subset(\ker\square)^\perp$. Since $\dbar\alpha=0$, applying $\dbar$ to
\[\square u = \dbar^*\dbar u+\dbar\dbar^* u  = \alpha\]
gives that $\dbar \dbar^*\dbar u = 0$. This implies that $\langle\dbar \dbar^*\dbar u,\dbar u\rangle = 0$ which is equivalent to $ \| \dbar^*\dbar u\|^2 = 0$. Thus $\dbar\dbar^* u  = \alpha$ and we may take $\phi = \dbar^* u \in\im \dbar^*$. But  $\im \dbar^* \subset (\ker\dbar)^\perp$. The regularity claim follows immediately from Corollary \ref{delreg} and the order of $\dbar^*$. \end{proof} 

\noindent
Putting all these results together, we obtain

\begin{corollary} Let $M$ be a complex manifold on which a subelliptic estimate holds. Assume also that $M$ is the total space of a bundle $G\to M\to X$ with $G$ a Lie group acting by holomorphic transformations with compact quotient $X=M/G$. With respect to a piecewise smooth section $X\hookrightarrow M$, define the slices $S_t$. Then there exists a finite-dimensional subspace $L|_{S_e}\subset L^2(X)$, such that the equation $\square u = \alpha$ has solutions $u\in L^2(M)$ with uniform estimates on the space of $\alpha$ satisfying $\alpha|_{S_t}\perp L|_{S_e}$ for all $t\in G$.\end{corollary}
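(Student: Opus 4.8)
The plan is to realize the finite-dimensional subspace $L|_{S_e}$ as the transverse support of a low-frequency spectral projection of $\square$, and then to recognize the orthogonality hypothesis on $\alpha$ as exactly the assertion that $\alpha$ lies in a high-frequency subspace $L_\delta$ on which $\square$ is boundedly invertible with the desired regularity. Working in degree $q>0$ as in the preceding sections, I would fix $\delta>0$ and set $L=\im P_\delta$. By Corollary \ref{bigsmooth} (in the subelliptic generality of the following remark) we have $L\subset H^\infty(M)$; since $P_\delta$ is a spectral projection of the $G$-invariant operator $\square$, the subspace $L$ is $L^2$-closed and $G$-invariant. Hence Theorem \ref{gam} applies and yields a finite-rank projection $P_{L^2(X)}$ on $L^2(X)$ with $L\subset\im({\bf 1}_{L^2(G)}\otimes P_{L^2(X)})$. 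I would then \emph{define} $L|_{S_e}:=\im P_{L^2(X)}$; by the proof of Theorem \ref{gam} this is precisely the common restriction of $L$ to the slices, spanned by the finitely many $\psi_k$, and so is finite-dimensional.

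The crux is to translate the hypothesis on $\alpha$ into membership in a spectral subspace. Writing $P={\bf 1}_{L^2(G)}\otimes P_{L^2(X)}$ and using the trivialization together with the smoothness and invariance of all structures, the fibrewise action of $P$ is $(P\alpha)(t,\cdot)=P_{L^2(X)}(\alpha|_{S_t})$, so the requirement $\alpha|_{S_t}\perp L|_{S_e}$ for every $t\in G$ is equivalent to $P\alpha=0$, that is, $\alpha\in\im P^\perp$. Taking orthogonal complements in the inclusion $\im P_\delta\subset\im P$ gives $\im P^\perp\subset(\im P_\delta)^\perp\subset L_\delta$, whence $\alpha\in L_\delta$.

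It then remains to solve $\square u=\alpha$ for $\alpha\in L_\delta$, which is immediate from the results established above in this section: the restriction of $\square$ to $L_\delta$ has spectrum in $[\delta,\infty)$, so there is a unique $u\in L_\delta$ with $\square u=\alpha$ and $\|u\|_{L^2(M)}\le(1/\delta)\|\alpha\|_{L^2(M)}$, while Corollary \ref{delreg} furnishes the genuine, uniform estimates $\|u\|_{s+1}\lesssim\|\square u\|_s+\|u\|_0=\|\alpha\|_s+\|u\|_0$, with $\lesssim$-constants independent of $\alpha$ ranging over this space. This is exactly the uniform solvability claimed.

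The main obstacle I anticipate is the bookkeeping in the second paragraph: one must verify that the purely geometric condition $\alpha|_{S_t}\perp L|_{S_e}$ genuinely coincides with the operator-theoretic condition $P\alpha=0$. This rests on (i) the trace theorem, to make the pointwise restrictions $\alpha|_{S_t}$ meaningful; (ii) the $G$-invariance of $L$, to identify all the restrictions $L|_{S_t}$ with the single subspace $L|_{S_e}=\im P_{L^2(X)}$; and (iii) matching the subspace produced abstractly by Theorem \ref{gam} with the span of the $\psi_k$ arising in the slice decomposition of \S\ref{decomp}. A minor point is the behaviour at the spectral value $\delta$: choosing $\delta$ outside the point spectrum, or simply using the inclusion $(\im P_\delta)^\perp\subset L_\delta$, removes any ambiguity.
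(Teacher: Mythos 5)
Your proposal is correct and takes essentially the same route as the paper's own proof: set $L=\im P_\delta$, use Corollary \ref{bigsmooth} and Theorem \ref{gam} to dominate $P_\delta$ by $P={\bf 1}_{L^2(G)}\otimes P_{L^2(X)}$, identify the slice-orthogonality hypothesis with $\alpha\in\im P^\perp\subset L_\delta$, and conclude via the spectral bound $\|u\|_0\le(1/\delta)\|\alpha\|_0$ together with Corollary \ref{delreg}. Your careful inclusion $\im P^\perp\subset(\im P_\delta)^\perp=L_\delta$ is in fact the correct direction, and it tacitly repairs the paper's loosely worded claim that the complement of $P$ ``contains'' $L_\delta$.
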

\begin{proof}Choose $\delta>0$. Corollary \ref{bigsmooth} and Theorem \ref{gam} imply that there exists a finite rank projection $P_{L^2(X)}\in\mathcal B(L^2(X))$ such that $P_\delta < {\bf 1}_{L^2(G)}\otimes P_{L^2(X)}$. The orthogonal complement of the latter projection is ${\bf 1}_{L^2(G)}\otimes P_{L^2(X)}^\perp$, which contains $L_\delta$, on which the $\dbar$-Neumann problem is regular by the results of this section. Putting $L|_{S_e} = \im P_{L^2(X)}$, we have the result.\end{proof}

\begin{remark} A similar result holds for the $\dbar$-equation by Lemma \ref{sol}.\end{remark}

\begin{ack} The author wishes to thank Indira Chatterji and Bernhard Lamel for helpful conversations and the Erwin Schr\"odinger Institute for its generous hospitality. \end{ack}

\bibliographystyle{amsplain}

\begin{thebibliography}{XXX}
%
\bibitem[A]{A} Atiyah, M.F.: Elliptic operators, discrete groups, and von Neumann algebras, {\it Soc. Math. de France, Ast\'erisque} {\bf 32-3}, (1976) 43--72
%
\bibitem[E]{E} Engli\v s, M.: Pseudolocal estimates for $\dbar$ on general pseudoconvex domains, {\it Indiana Univ.\ Math.\ J.}\ {\bf 50}, (2001) 1593--1607
%
\bibitem[FK]{FK} Folland, G.B.\ \& Kohn J.J.:  The Neumann Problem for the Cauchy-Riemann Complex,  {\it Ann.\ Math.\ Studies}, No.\ 75 Princeton University Press, Princeton, N.J.\ 1972
%
\bibitem[GHS]{GHS} Gromov, M., Henkin, G.\ \& Shubin, M.: Holomorphic $L^2$ functions on coverings of pseudoconvex manifolds, {\it Geom.\ Funct.\ Anal.}\ {\bf 8}, (1998) 552--585
%
\bibitem[HHK]{HHK} Heinzner, P., Huckleberry, A.\ T., Kutzschebauch, F.: Abels' theorem in the real analytic case and applications to complexifications. In: {\it Complex Analysis and Geometry}, Lecture Notes in Pure and Applied Mathematics, Marcel Dekker 1995, 229--273
%
\bibitem[LL]{LL} Lieb, E.H., Loss, M.: {\it Analysis (Graduate Studies in Mathematics)} American Mathematical Society; 2 ed (2001)
%
\bibitem[P1]{P1} Perez, J.J.: The $G$-Fredholm property for the $\dbar$-Neumann problem, {\it J. Geom.\ Anal.}\ (2009) {\bf 19}: 87--106 
%
\bibitem[P2]{P2} Perez, J.J.: The Levi problem on strongly pseudoconvex $G$-bundles, {\it Ann.\ Glob.\ Anal.\ Geom.}\ (2010) {\bf 37} 1--20
%
\bibitem[P3]{P3} Perez, J.J.: Subelliptic boundary value problems and the $G$-Fredholm property, \verb#http://arxiv.org/abs/0909.1476#
%
\end{thebibliography}

\end{document}